\newtheorem{theorem}{Theorem}[section]
\newtheorem{proposition}[theorem]{Proposition}
\theoremstyle{definition}
\newtheorem{definition}[theorem]{Definition}
\newtheorem{example}[theorem]{Example}
\numberwithin{equation}{section}
\newcommand\C{\mathbf{C}} 
\newcommand\Q{\mathbf{Q}}
\newcommand\Z{\mathbf{Z}}
\newcommand\F{\mathbf{F}}
\newcommand\twomatr[4]{\begin{pmatrix}#1&#2\\ #3&#4 \end{pmatrix}}
\newcommand\stwomatr[4]{\left(\begin{smallmatrix}#1&#2\\  
                               #3&#4 \end{smallmatrix}\right)}
\newcommand\tensor{\otimes}
\newcommand\isomorphic{\cong}
\newcommand\union{\cup}
\newcommand\intersect{\cap}
\newcommand\abs[1]{{\left|#1\right|}}
\newcommand\Projective{{\bf P}} 
\newcommand\modforms{\mathcal{M}}
\newcommand\cuspforms{\mathcal{S}}
\newcommand\Half{\mathcal{H}} 
\newcommand\GammaN{{\Gamma(N)}} 
\newcommand\LL{\mathcal{L}} 
\DeclareMathOperator{\Divisor}{div}
\newcommand\phee{\varphi}
\begin{document}

\title
[Modular forms constructed from moduli; explicit modular curves]
{Modular forms constructed from moduli of elliptic curves, with
  applications to explicit models of modular curves}


\author{Kamal Khuri-Makdisi}
\address{Mathematics Department, 
American University of Beirut, Bliss Street, Beirut, Lebanon}
\email{kmakdisi@aub.edu.lb}
\thanks{September 1, 2018}


\subjclass[2000]{Primary 11F11, 14G35}


\begin{abstract}
These are the lecture notes from my portion of a mini-course for
the summer school ``Building Bridges 3'' that was held in Sarajevo
during July 2016.  My lectures covered the Katz definition of modular
forms, a family of forms defined from this perspective and their
relation to Eisenstein series, and methods of finding explicit models of
modular curves.  The treatment is purely expository, and the results are
mostly standard, although a few points of view may not be as widely
known as they deserve to be.
\end{abstract}

\maketitle


\section{Lecture 1}

In the previous lectures in this summer school, we have considered
modular forms as holomorphic functions $f(\tau)$ for $\tau \in \Half$,
with the $q$-expansion (when $f$ is a newform) encoding an associated
Galois representation that we have used as a black box.

We now want to describe the connection between modular forms and the
modular curves, such as $X(N)$, parametrizing elliptic curves with level
structure.  A word of caution: this parametrization of elliptic curves by
the points of the modular curve $X(N)$ is completely different from the
arithmetic parametrization of a single elliptic curve over $\Q$ as a
quotient of the Jacobian of $X_0(N)$.

Recall our fundamental congruence subgroups of $\operatorname{SL}(2,\Z)$:
\begin{equation}
\label{equation1.1}
\begin{split}
\operatorname{SL}(2,\Z) &= \Gamma(1) > \Gamma_0(N) > \Gamma_1(N) > \Gamma(N),\\
\Gamma_0(N) &=
       \{\twomatr{a}{b}{c}{d} \in \Gamma(1) \mid c \equiv 0 \pmod{N}\},\\
\Gamma_1(N) &= 
       \{\twomatr{a}{b}{c}{d} \mid c \equiv 0,\quad
                                   a \equiv d \equiv 1 \pmod{N}\},\\
\Gamma(N) &=
       \{\twomatr{a}{b}{c}{d} \mid b \equiv c \equiv 0,\quad
                                   a \equiv d \equiv 1 \pmod{N}\}.\\
\end{split}
\end{equation}

The connection between modular forms and elliptic curves arises by
associating, to each value $\tau \in \Half$,  an elliptic curve
$E_\tau$, which is analytically $E_\tau = \C/(\Z+\Z\tau)$.  Suppose
$\tau, \tau' \in \Half$ are related via an element $\gamma =
\stwomatr{a}{b}{c}{d} \in \Gamma(1)$, so that 
$\tau' = \gamma \tau = (a\tau+b)/(c\tau+d)$.  It then follows that
$E_\tau \isomorphic E_{\tau'}$, via
\begin{equation}
\label{equation1.2}
 z + \Z + \Z\tau \in E_\tau
   \longleftrightarrow
 z' = \frac{z}{c\tau + d} + \Z + \Z\tau' \in E_{\tau'}.
\end{equation}
This basically expresses, in terms of the elliptic curves $E_\tau$ and
$E_{\tau'}$, the standard fact that the lattices
$\Z + \Z\tau$ and $\Z + \Z\tau'$ are homothetic if and only if $\tau'$
and $\tau$ are related by an element $\gamma \in \Gamma(1)$.  For
equivalence under the smaller congruence subgroup
$\Gamma(N)$, we have the following more precise statement.

\begin{proposition}
\label{proposition1.1}
Let $\tau, \tau' \in \Half$.  Then $\tau$ and $\tau'$ are related by
an element $\gamma \in \GammaN$ if and only if there is an isomorphism
$\phi: E_\tau \to E_{\tau'}$ between the corresponding elliptic
curves, such that 
\begin{equation}
\label{equation1.3}
\phi(1/N) = 1/N, \qquad \phi(\tau/N) = \tau'/N.
\end{equation}
The above is of course shorthand for saying
$\phi(1/N + \Z + \Z \tau) = 1/N + \Z + \Z\tau'$ 
and  $\phi(\tau/N + \Z + \Z \tau) = \tau'/N + \Z + \Z\tau'$.
\end{proposition}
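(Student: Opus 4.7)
The proof has two directions, both built on the fact recalled in \eqref{equation1.2}: any complex-analytic isomorphism $\phi: E_\tau \to E_{\tau'}$ lifts uniquely to multiplication by some $\lambda \in \C^\times$ on $\C$, sending the lattice $\Z + \Z\tau$ bijectively onto $\Z + \Z\tau'$. The plan is to first make this correspondence between $\phi$ and a matrix $\gamma \in \operatorname{SL}(2,\Z)$ very explicit, and then translate the two conditions in \eqref{equation1.3} into the congruences defining $\Gamma(N)$.

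For the correspondence, I would argue that $\lambda^{-1} \cdot 1$ and $\lambda^{-1}\cdot \tau'$ form a $\Z$-basis of $\Z + \Z\tau$, so writing $\lambda^{-1} = c\tau + d$ and $\lambda^{-1}\tau' = a\tau+b$ with $a,b,c,d \in \Z$ produces a matrix $\gamma = \stwomatr{a}{b}{c}{d}$. The determinant is $\pm 1$, and the sign is forced to be $+1$ by the standard calculation $\Imag(\tau') = (ad-bc)\Imag(\tau)/|c\tau+d|^2$ together with $\tau,\tau' \in \Half$. Dividing gives $\tau' = (a\tau+b)/(c\tau+d)$, so $\gamma \in \operatorname{SL}(2,\Z)$ relates $\tau$ and $\tau'$, and $\phi$ is exactly the isomorphism in \eqref{equation1.2}. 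Conversely, every $\gamma \in \operatorname{SL}(2,\Z)$ with $\tau' = \gamma\tau$ furnishes such a $\phi$.

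It remains to check that \eqref{equation1.3} holds if and only if $\gamma \in \Gamma(N)$. Observing that multiplication by $c\tau+d$ carries the lattice $\Z+\Z\tau'$ onto $\Z+\Z\tau$ (its generators $c\tau+d$ and $a\tau+b$ span $\Z+\Z\tau$ because $\det\gamma=1$), the condition $\phi(1/N) - 1/N \in \Z + \Z\tau'$ becomes $(1 - c\tau - d)/N \in \Z + \Z\tau$, i.e.\ $c \equiv 0$ and $d \equiv 1 \pmod{N}$. Likewise $\phi(\tau/N) - \tau'/N \in \Z + \Z\tau'$ becomes $((1-a)\tau - b)/N \in \Z+\Z\tau$, i.e.\ $a \equiv 1$ and $b \equiv 0 \pmod{N}$. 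Together these are precisely the defining congruences for $\Gamma(N)$ in \eqref{equation1.1}.

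The only subtle point, and where I would take the most care, is the bookkeeping in the previous paragraph: the conditions in \eqref{equation1.3} are equalities modulo $\Z + \Z\tau'$, while the congruences defining $\Gamma(N)$ live in the basis $\{1,\tau\}$ of the other lattice. Using multiplication by $c\tau + d$ to translate between the two lattices is what makes the equivalence clean, and once this is set up the rest is immediate.
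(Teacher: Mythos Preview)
Your argument is correct and complete. The paper actually states Proposition~\ref{proposition1.1} without proof, treating it as a standard fact from which the moduli interpretation of $Y(N)$ is then deduced; so there is no ``paper's proof'' to compare against. Your write-up supplies exactly the verification one would want: the bijection between isomorphisms $\phi$ and matrices $\gamma\in\Gamma(1)$ via the lift $z\mapsto z/(c\tau+d)$, and then the clean translation of \eqref{equation1.3} into the congruences defining $\Gamma(N)$ by multiplying through by $c\tau+d$ to move from $\Z+\Z\tau'$ back to $\Z+\Z\tau$. The only step worth making a touch more explicit for a reader is that $\{1,\tau\}$ is $\R$-linearly independent (since $\tau\in\Half$), so that a condition like $\frac{1-d}{N}-\frac{c}{N}\tau\in\Z+\Z\tau$ really forces both rational coefficients to be integers; but this is routine.
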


From the above, we deduce that the quotients 
$Y(1) = \Gamma(1) \backslash \Half$ and $Y(N) = \GammaN \backslash
\Half$ parametrize, respectively, isomorphism classes of elliptic curves
over $\C$, and isomorphism classes of triples $(E,P,Q)$ where $E$ is
an elliptic curve over $\C$, and $\{P,Q\}$ is a basis for the
$N$-torsion $E[N]$ that is symplectic, in the sense that $e_N(P,Q) =
\exp(2\pi i/N)$ for the Weil pairing.  The $\GammaN$-orbit of a point
$\tau \in \Half$ corresponds to the isomorphism class of the triple
$(\C/(\Z+\Z\tau), 1/N, \tau/N)$.  

One says that $Y(1)$ and $Y(N)$ are \emph{moduli spaces} parametrizing
the moduli of elliptic curves, respectively without or with a basis
for the $N$-torsion.  We leave it to the reader to look up or
determine as an exercise the moduli problems that are parametrized by
the quotients $Y_0(N) = \Gamma_0(N)\backslash \Half$ and
$Y_1(N) = \Gamma_1(N)\backslash \Half$.

It turns out to be much better to work with a compactification, the
modular curve $X(N)$, of $Y(N)$, which one can think of as adding the
cusps to $Y(N)$\footnote{
There is an extensive theory of these models of
modular curves, not just over $\C$, but over number fields and in
characteristic $p$.  We will not have the space to touch directly on the
arithmetic aspects in these lectures, but the reader is encouraged to think
at least about the way in which our discussion over $\C$ in fact takes
place over a number field (which is usually $\Q$ or the $N$th cyclotomic
field), viewed as a subfield of $\C$.
}.
With the cusps included, one can view the space of modular forms
$\modforms_k(\GammaN) = H^0(X(N), \LL^k)$ as being the space of holomorphic
sections of the $k$th power of a line bundle $\LL$ on $X(N)$, at least for
$N \geq 3$ to avoid issues of elliptic elements (which would arise if we
used, say, $X_0(N)$).  This point of view makes the required behavior of
modular forms at cusps automatic, once we require holomorphy at the
cusps that have been added to obtain $X(N)$.  We will follow up on the
interpretation of modular forms as sections of line bundles in Lecture~2.

In this lecture, we will focus instead on interpreting modular forms
based on the moduli of elliptic curves parametrized by $X(N)$.  The
precise algebraic formulation is due to N. Katz; see for example
Section~2.1 of~\cite{Katz}.
We shall be somewhat cavalier with the precise definition, and simply
state the following.

\begin{definition}
\label{definition1.2}
Let $k \geq 1$ be an integer.
A Katz modular form of weight~$k$ on $\GammaN$ is a ``nice'' function
$f(E,P,Q,\omega)$ satisfying the homogeneity property
\begin{equation}
\label{equation1.4}
f(E,P,Q,c \omega) = c^{-k} f(E,P,Q,\omega).
\end{equation}
Informally, the domain of definition of $f$ is tuples
$(E,P,Q,\omega)$, where $E$ is an elliptic curve, the pair $(P,Q)$ is
a symplectic basis for the  $N$-torsion $E[N]$, and $\omega \in
H^0(E,\Omega^1)$ is a global $1$-form on $E$.
The precise definition allows the argument $E$ in the tuple to be a
generalized elliptic curve scheme $E/S$ over a base $S$, and brings in
compatibility conditions under change of base; over $\C$, these
compatibility conditions amount
to holomorphy on $\Half$ and at the cusps (this is what we mean by
a ``nice'' function).  In these lectures, however, we will pretend
to consider only tuples defined over $\C$.  In this context, the
choice of $\omega$, up to a complex scalar, determines the homothety
class of the lattice $L$ of periods of $E$, namely $L = \{\int_\gamma
\omega \mid \gamma \in H_1(E,\Z)\}$, and $E \isomorphic \C/L$.  This
ties in with the perspective seen elsewhere, of modular forms on
$\Gamma(1)$ as functions of lattices.  In the context of these
lectures, we will simply pass between modular forms $f(\tau)$ with
$\tau \in \Half$, and the corresponding Katz modular form which we
evaluate on the tuple 
$(E,P,Q,\omega) = (\C/(\Z+\Z\tau), 1/N, \tau/N, dz)$;
here $z$ is the coordinate on $\C$ when we view $E = \C/(\Z+\Z\tau)$.
\end{definition}

\begin{example}
\label{example1.3}
In case of level $N=1$, we can dispense with specifying the $P$ and
$Q$ in the tuples above.  Then the Eisenstein series (restricted in this
example to even weight $k \geq 4$) is given in Katz and traditional form as
\begin{equation}
\label{equation1.5}
\begin{split}
G_k(E,\omega) &= \sum_{0 \neq \gamma \in H_1(E,\Z)} \left[ \int_\gamma
  \omega \right]^{-k},\\
G_k(\tau) &= \sum_{0 \neq m + n\tau \in \Z + \Z\tau} (m+n\tau)^{-k}
 = {\sum}' (m+n\tau)^{-k},\\
\end{split}
\end{equation}
where as usual ${\sum}'$ denotes a sum where we omit terms that look
like $0^{-k}$ and are hence meaningless.

We admit that calling~\eqref{equation1.5} a Katz form is not quite
fair, because one really wants to have an algebraic construction of
the value on the tuple.  So we point out that $G_4$ and $G_6$ can be
obtained from the following algebraic construction: starting from the
elliptic curve $E$, its short Weierstrass form $y^2 = x^3 + ax + b$
(over, say, a field not of characteristic $2$ or~$3$) is uniquely
determined up to changing $(x,y)$ into $(x',y') = (c^{-2} x, c^{-3}
y)$ with $c \neq 0$; this transforms $(a,b)$ into $(a',b') = (c^{-4} a,
c^{-6} b)$.  However, if one has access not only to $E$ but also to the
global differential $\omega$ on $E$, then one can normalize the choice
of coordinates to obtain $\omega = dx/2y$.  In this context, the
coefficients $a$ and $b$ in the normalized Weierstrass equation are
Katz modular forms of level $\Gamma(1)$ of weights $4$ 
and~$6$, and are in fact simple multiples of $G_4$ and $G_6$,
respectively.  This can be seen via the parametrization of a complex
elliptic curve by $x = \wp(z) = z^{-2} + \cdots, y = (1/2)\wp'(x) =
-z^{-3} + \cdots$, and the usual differential equation for $\wp$.  We
can also identify the higher weight Eisenstein series on $\Gamma(1)$
as coefficients in the Laurent expansion of $\wp$, which can be
defined purely algebraically over a field of characteristic zero (in
terms of the completion of the local ring of $E$ at the origin $O$,
which allows us to integrate the formal power series of $\omega$ and
obtain an ``analytic'' uniformizer $z$ in this completed local ring).
\end{example}

We wish to generalize the abovementioned principle to $\GammaN$, and to
define a wide family of Katz-style modular forms, which when evaluated
on a tuple $(E,P,Q,\omega)$ are given as coefficients in
the Laurent series of certain elements in the function field of $E$,
which analytically can be viewed as elliptic functions with respect to
$\Z + \Z\tau$.  This family of modular forms will include all the
Eisenstein series on $\GammaN$.  We therefore begin by recalling the
definition of the relevant Eisenstein series.

\begin{definition}
\label{definition1.4}
For $i,j \in \Z$, let $\alpha = (i/N, j/N)$, which we will usually
view as an element of $\Q^2 / \Z^2$; sometimes, by abuse of
notation, we will identify $\alpha$ with the torsion point
$(i+j\tau)/N = iP + jQ \in E_\tau[N]$, all of which depends of course
on a varying $\tau \in \Half$, or equivalently on the corresponding tuple
$(E = E_\tau,P,Q,\omega)$.  We then define the Eisenstein series of
arbitrary weight~$k$, with parameter~$\alpha$, by
\begin{equation}
\label{equation1.6}
G_{k,\alpha}(\tau) 
  = {\sum_{m,n\in\Z}\!\!\!}' \bigr(m+n\tau + i/N + (j/N)\tau\bigr)^{-k}
  = {\sum_{\ell \in \Z + \Z\tau}\!\!\!\!}' (\ell + \alpha)^{-k}.
\end{equation}
In the above, the notation ${\sum}'$ means we omit the term with 
$\ell + \alpha = 0$, if it is present in the sum (which is essentially
only when $i=j=0$).  For $k > 2$, the sum in~\eqref{equation1.6} converges
absolutely and uniformly for $\tau$ in any compact set, and yields a
modular form of weight~$k$ on $\GammaN$. 

It is traditional to
modify the definition to make sense of the lower weights $k \in \{1,2\}$
by Hecke's summation method~\cite{Hecke27}:
\begin{equation}
\label{equation1.7}
\begin{split}
G_{k,\alpha}(\tau,s) 
 &= {\sum_{\ell \in \Z + \Z\tau}\!\!\!\!}' (\ell + \alpha)^{-k}
                                  \abs{\ell + \alpha}^{-2s},
\\
G_{k,\alpha}(\tau) &= G_{k,\alpha}(\tau,0) 
\text{ after analytic continuation in } s.\\
\end{split}
\end{equation}
This yields the same Eisenstein series as before for $k > 2$.  For $k = 2$,
it turns out that $G_{2,\alpha}(\tau)$ is not quite holomorphic, but is the
sum of $-\pi/(\text{Im } \tau)$ and a holomorphic function of $\tau$; so
to obtain a holomorphic weight~$2$ Eisenstein series one must consider a
difference such as $G_{2,\alpha} - G_{2,0}$.  Reassuringly, for
weight~$k=1$, $G_{1,\alpha}$ is indeed holomorphic.  We will not consider
the case $k=0$ in these lectures, but elsewhere in this summer school the
series $G_{0,0}(\tau,s)$ plays a major role via the Kronecker limit
formula.
\end{definition}

We need a few more preliminaries before we can construct the general
family of modular forms that we promised above.  In the meantime, to whet
the reader's appetite, let us give an \textit{ad hoc} Katz-style
interpretation of the Eisenstein series of weights $2$ and~$3$.  Take a
nonzero $\alpha$, and view it also as a nonzero $N$-torsion point on the
elliptic  curve $E_\tau$.  Then its coordinates on the Weierstrass model
are $(x_\alpha, y_\alpha) = (\wp(\alpha), (1/2)\wp'(\alpha))$, so we
obtain from the series for $\wp$ and $\wp'$ that
\begin{equation}
\label{equation1.7.5}
\begin{split}
\wp(\alpha) &= {\sum_{m,n}}'
     \left[ (\alpha+m+n\tau)^{-2} - (m+n\tau)^{-2} \right] 
     = G_{2,\alpha}(\tau) - G_{2,0}(\tau),\\
\wp'(\alpha) &= -2{\sum_{m,n}}' (\alpha+m+n\tau)^{-3}
       = -2 G_{3,\alpha}(\tau).\\
\end{split}
\end{equation}
This interprets the holomorphic weight~$2$ form $G_{2,\alpha} -
G_{2,0}$ as the $x$-coordinate of a torsion point on the Weierstrass
model (once normalized by the choice of global differential);
similarly, the form $G_{3,\alpha}$ is essentially the $y$-coordinate.
The identification $\wp(\alpha) = G_{2,\alpha}(\tau) - G_{2,0}(\tau)$
is however slightly more delicate than written above, since
convergence issues prevent us from simply expanding the sum over
$(m,n)$ in the first line above.  The end result is correct, however,
using techniques similar to the construction in
Definition~\ref{definition1.5} below.

It is also interesting to consider the slope
$\lambda = (y_\beta - y_\alpha)/(x_\beta - x_\alpha)$ through two
torsion points in the Weierstrass model; this has a natural
interpretation as a Katz modular form. 
Here we assume that $\alpha, \beta \neq 0$ and that $\alpha + \beta
\neq 0$.  The line in question through the points
$(x_\alpha,y_\alpha)$ and $(x_\beta,y_\beta)$ also passes through the
point $(x_\gamma,y_\gamma)$ on the elliptic curve, with
$\alpha+\beta+\gamma=0$, by the addition law on the elliptic curve.
It is immediate that this slope $\lambda$ is essentially the
ratio $(G_{3,\beta} - G_{3,\alpha})/(G_{2,\beta}-G_{2,\alpha})$, so it
transforms under $\GammaN$ the same way as a modular form of
weight~$1$.  The question is whether the quotient $\lambda$ (when
viewed as a function of $\tau$) is holomorphic or merely meromorphic on
$\Half$ and the cusps.  However, it is known from the formulas for the
addition law on a Weierstrass curve that $\lambda^2 = x_\alpha +
x_\beta + x_\gamma$, and this last sum is a genuine modular form
(being a linear combination of $G_2$'s), so $\lambda$ cannot have any
poles.  We will show later in this lecture that $\lambda$ itself is
essentially the sum $G_{1,\alpha}+G_{1,\beta}+G_{1,\gamma}$; this is
roughly equivalent to a classical formula for the Weierstrass $\zeta$
function, but we will present the argument differently below.

As our final preparatory comment on Eisenstein series, we point out
that one can bypass Hecke's analytic continuation in $s$ by adopting a
different point of view on Eisenstein series of weights $1$ and~$2$.
Instead of defining a single $G_{k,\alpha}$, it turns out that one can
write down convergent series for certain linear combinations
$\sum_\alpha m_\alpha G_{k,\alpha}$, with good convergence for all
$k \geq 1$.  The technique is as follows~\cite{KKMmoduli}.

\begin{definition}
\label{definition1.5}
Consider a finite number of $\alpha \in (1/N)\Z^2$, and attach to
each $\alpha$ a coefficient $m_\alpha \in \C$, with the properties
\begin{equation}
\label{equation1.8}
\sum_\alpha m_\alpha = 0, \qquad \sum_\alpha m_\alpha \alpha = 0.
\end{equation}
Write $D = \sum_\alpha m_\alpha [\alpha]$ for the formal linear combinations
of symbols $[\alpha]$.  We then define
\begin{equation}
\label{equation1.9}
G_{k,D}(\tau)
   = \left[ \sum_\alpha m_\alpha G_{k,\alpha}(\tau,s)\right]_{s=0}
   = \sum_{\ell \in \Z + \Z\tau}
       \left(
       {\sum_\alpha}' m_\alpha (\ell + \alpha)^{-k}
       \right).
\end{equation}
The latter sum converges in the given order $\sum_\ell (\sum_\alpha')$,
since condition~\eqref{equation1.8} implies that $\sum_\alpha'
m_\alpha (\ell + \alpha)^{-k} = O(\ell^{-k-2})$, which implies good
convergence of the sum over $\ell$ for all $k > 0$.  Note that if the
coefficients $m_\alpha$ are integers, we can view the formal sum $D$ as a
divisor on the elliptic curve $E_\tau$.  In that setting,
\eqref{equation1.8} says that $D$ is a principal divisor on $E_\tau$, and
that the preimages $\alpha$ of the points $\alpha + \Z + \Z\tau \in E_\tau$
are chosen so that their sum (in $\C$, not just in $\C/(\Z+\Z\tau)$) is
exactly zero.
\end{definition}

We can now describe the construction of our family of Katz modular forms.

\begin{definition}
\label{definition1.6}
Take a finite number of $\alpha \in N^{-1}\Z^2$ and coefficients
$m_\alpha \in \Z$, satisfying condition~\eqref{equation1.8} above.
Write $D = \sum_\alpha m_\alpha [\alpha]$ for the formal sum.
For $k \geq 1$, we define functions $f_{k,D}$ and $g_{k,D}$ of the
tuple $(E,P,Q,\omega)$ by the following procedure (which works only if
$E$ is defined over a field $K$ of characteristic zero):
\begin{enumerate}
\item
Associate to each $\alpha = (i/N,j/N)$ the point $P_\alpha = iP+jQ$ on the
elliptic curve $E$, as usual, and form the associated divisor (which
by abuse of notation will also be called $D$) as above.  Thus $D =
\sum_\alpha m_\alpha P_\alpha$, and we will ignore the distinction
between $\alpha$ and $P_\alpha$ whenever it suits us for the
exposition.  It follows that $D$ is a principal divisor on $E$, and
there exists an element $\phi_D \in K(E)$ (the function field of $E$)
with $\Divisor \phi_D = D$.  The values $f_{k,D}(E,P,Q,\omega)$ and
$g_{k,D}(E,P,Q,\omega)$ will be constructed out of the Laurent
expansion of $\phi_D$ and its logarithmic derivative $d\phi_D/\phi_D$
at the origin $O$ of the elliptic curve $E$.
\item
The Laurent expansion of $\phi_D$ at $O$ needs to be expressed in
terms of a uniformizer, i.e., a local coordinate near $O$ which
vanishes there. Since we are in characteristic zero, we can integrate
the global form $\omega$ to define an ``analytic uniformizer'' $z$ at
$O$, with $dz = \omega$.  More precisely, start with an ``algebraic
uniformizer'' $t \in K(E)$ at the origin $O$; for example, if a
Weierstrass form of $E$ is $y^2 = x^3 + ax + b$, then one choice of
algebraic uniformizer is $t = x/y$.  Then the completed local ring of
$E$ at $O$ is $\widehat{\mathcal{O}}_{E,O} = K[[t]]$.  In this
situation, the expansion of $\omega$ at $O$ can be written as
$\omega = (c_0 + c_1 t + c_2 t^2 + \cdots)dt$, with $c_0 \neq 0$.
Then define $z \in \widehat{\mathcal{O}}_{E,O}$ by
$z = \int \omega = c_0 t + c_1 t^2/2 + c_2 t^3/3 + \cdots$. 
It follows that in fact $\widehat{\mathcal{O}}_{E,O} = k[[z]]$ as well.
\item
Normalize $\phi_D$, which is unique up to a factor in $K^*$, by
requiring that its Laurent expansion at the origin $O$ be of the form
$\phi_D = z^n(1 + f_1 z + f_2 z^2 + f_3 z^3 + \cdots)$, where $n$ is
the multiplicity of $O$ in $D$ (this is the sum of $m_\alpha$ over all
those $\alpha$ that map to $O$ in the curve).  Similarly, and without need
for normalizing $\phi_D$, consider the logarithmic differential
$d\phi_D / \phi_D$, and its expansion in terms of $z$ at $O$:
$d\phi_D / \phi_D = (n/z + g_1 + g_2 z + g_3 z^2 + \cdots)dz$.
\item
The value of $f_{k,D}$ at our tuple is then the coefficient $f_k$
above, while the value of $g_{k,D}$ is the coefficient $g_k$ above.
\end{enumerate}
\end{definition}

\begin{proposition}
\label{proposition1.7}
The functions $f_{k,D}$ and $g_{k,D}$ defined above are Katz modular
forms, whose value on the standard tuple $(E_\tau, 1/N, \tau/N, dz)$
give modular forms in $\modforms_k(\GammaN)$.
Moreover, when we evaluate $g_{k,D}$ at the standard tuple
associated to $\tau \in \Half$, the value is
$g_{k,D}(E_\tau,1/N,\tau/N,dz) = -G_{k,D}$.
\end{proposition}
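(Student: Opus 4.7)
The plan is to verify the Katz-modular-form properties of $f_{k,D}$ and $g_{k,D}$ (homogeneity, invariance under isomorphism of tuples, and holomorphy) separately from the explicit identification $g_{k,D} = -G_{k,D}$ on the standard tuple. For the homogeneity, replacing $\omega$ by $c\omega$ replaces the analytic uniformizer $z$ by $cz$, while $\phi_D$ is intrinsic to $(E,D)$. Re-expanding $\phi_D = z^n(1 + f_1 z + f_2 z^2 + \cdots)$ in the new coordinate $z' = cz$ and rescaling $\phi_D$ by $c^n$ to restore a leading $1$ gives $f_k \mapsto c^{-k}f_k$; likewise $d\phi_D/\phi_D$ is unchanged by rescaling $\phi_D$, and re-expressing it via $dz = dz'/c$ yields $g_k \mapsto c^{-k} g_k$. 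Invariance under an isomorphism of tuples is automatic since the construction only uses $(E, D, \omega)$; holomorphy on $\Half$ follows from the analytic dependence of $\phi_D$ on $\tau$ that the sigma-function presentation below makes manifest, and holomorphy at the cusps can be deduced either directly from the $q$-expansion or a posteriori from the identification with $G_{k,D}$.

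The heart of the argument is the explicit calculation of $g_{k,D}$ on the standard tuple. I would choose
\[
\phi_D(z) \;=\; c_D \cdot \prod_\alpha \sigma(z - \alpha)^{m_\alpha},
\]
where $\sigma$ is the Weierstrass sigma function for the lattice $L = \Z + \Z\tau$ and $c_D$ is a scalar (holomorphic in $\tau$) that effects the leading-coefficient normalization. The two conditions in~\eqref{equation1.8}, combined with the quasi-periodicity $\sigma(z+\omega) = \pm e^{\eta(\omega)(z+\omega/2)} \sigma(z)$ of $\sigma$, show that the product is $L$-periodic and hence descends to a genuine element of the function field of $E_\tau$ with divisor $D$. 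Taking the logarithmic derivative yields
\[
\frac{d\phi_D}{\phi_D} \;=\; \left[\,\sum_\alpha m_\alpha \zeta(z-\alpha)\,\right] dz,
\]
where $\zeta = \sigma'/\sigma$ is the Weierstrass zeta function.

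The key technical step is to substitute the classical expansion
\[
\zeta(u) \;=\; \frac{1}{u} + \sum_{\omega \neq 0}\left[\frac{1}{u-\omega} + \frac{1}{\omega} + \frac{u}{\omega^2}\right]
\]
at $u = z - \alpha$ and sum over $\alpha$ with weights $m_\alpha$. The convergence-factor contributions $\sum_\alpha m_\alpha/\omega$ and $\sum_\alpha m_\alpha(z-\alpha)/\omega^2$ vanish identically for each $\omega$ by~\eqref{equation1.8}, collapsing the sum to $\sum_{\omega \in L}\sum_\alpha m_\alpha/(z - \alpha - \omega)$, which is absolutely convergent near $z = 0$ because the inner sum over $\alpha$ is $O(\omega^{-k-2})$. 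Expanding each non-singular term as a geometric series in $z$ and separating off the finitely many pairs $(\alpha, \omega)$ with $\alpha + \omega = 0$ (whose combined residues supply the expected $n/z$ leading pole), one finds that the coefficient of $z^{k-1}$ equals
\[
-\sum_{\omega \in L}\,{\sum_\alpha}'\frac{m_\alpha}{(\alpha+\omega)^k} \;=\; -G_{k,D}(\tau),
\]
matching~\eqref{equation1.9} term by term.

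The main obstacle is precisely this rearrangement: the double sum $\sum_\alpha m_\alpha \zeta(z-\alpha)$ is only conditionally convergent as initially written, and its reorganization into the specific summation order defining $G_{k,D}$ hinges on both conditions of~\eqref{equation1.8}. Once justified, the term-by-term match with~\eqref{equation1.9} is immediate, and this is exactly the mechanism by which the construction bypasses Hecke's analytic continuation for the delicate weights $k = 1$ and $k = 2$.
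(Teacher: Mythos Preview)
Your argument is correct and lands at the same place as the paper's---namely the expansion of $d\phi_D/\phi_D$ as $\sum_{\ell}\sum_\alpha m_\alpha/(z-\ell-\alpha)\,dz$, followed by the geometric-series expansion and identification of the $z^{k-1}$ coefficient with $-G_{k,D}$---but you reach it by a different construction of $\phi_D$. You take the classical product $\prod_\alpha \sigma(z-\alpha)^{m_\alpha}$, differentiate to $\sum_\alpha m_\alpha \zeta(z-\alpha)$, and then use the two conditions in~\eqref{equation1.8} to kill the Weierstrass convergence factors $1/\omega$ and $(z-\alpha)/\omega^2$ termwise. The paper instead writes $\phi_D$ directly as the lattice product~\eqref{equation1.10}, $\prod_\ell \prod_\alpha' (1 - z/(\ell+\alpha))^{m_\alpha}$, and observes that~\eqref{equation1.8} already makes this product converge well; periodicity is not automatic in that presentation and is checked by comparing the logarithmic derivative~\eqref{equation1.11} with the one coming from your $\sigma$-product. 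So the two routes are dual: yours gets periodicity for free from the quasi-periodicity of $\sigma$ but has to argue away the convergence terms; the paper's gets the clean sum~\eqref{equation1.11} for free but has to verify periodicity. The paper explicitly flags your construction as ``the usual'' one, so this is a recognized alternative rather than a new idea. Your added discussion of the weight-$k$ homogeneity under $\omega \mapsto c\omega$ is a point the paper defers entirely to~\cite{KKMmoduli}, and is a welcome inclusion.
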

\begin{proof}
For the full proof, see Sections 2 and~3 of~\cite{KKMmoduli}.  We
note that in arbitrary characteristic, it is still possible to define
Katz modular forms from the Laurent series coefficients of $\Phi_D$ and its
logarithmic differential, when expanded in terms of the algebraic
uniformizer $t$, which can be chosen sufficiently canonically.

Let us sketch a proof of the assertion that $g_{k,D} = -G_{k,D}$, 
thereby exhibiting Eisenstein series as special cases of this
construction.  We first identify the function $\phi_D$ (up to a
constant factor, which disappears in the logarithmic differential).
Morally, we would like to write down directly the desired function
with zeros and poles as predicted by the (translates of the)
$\{\alpha\}$:
\begin{equation}
\label{equation1.10}
\phi_D(z) = \prod_{\ell \in \Z+\Z\tau} \left[
{\prod_\alpha}'
   \left(
       1 - \frac{z}{\ell+\alpha}
   \right)^{m_\alpha}
\right],
\end{equation}
where in the above, the notation $\prod'$ means that if $\ell+\alpha=0$,
then we include the factor $z^{m_\alpha}$ instead.
(The above product is somewhat different from the usual construction of
$\phi_D = \prod_\alpha \sigma(z-\alpha)^{m_\alpha}$ as a product of shifted
Weierstrass $\sigma$-functions.)

The question is whether the product in~\eqref{equation1.10} is really
doubly periodic with respect to $\Z + \Z\tau$.  The
conditions~\eqref{equation1.8} ensure that the above product over $\ell$
converges well.  Then the identity 
$1-\frac{z+w}{\ell+\alpha} =
  (1-\frac{w}{\ell+\alpha})(1-\frac{z}{\ell-w+\alpha})$, 
plus the good convergence of the products $\prod_\ell \prod_\alpha$
of each factor on the right hand side, tells us that $\phi_D(z+w) =
C_w \phi_D(z)$ for $w \in \Z + \Z\tau$, and some appropriate constant
$C_w$ for each $w$.  It turns out however that the $C_w$ are~$1$,
which follows by comparing the logarithmic differential of the product
in~\eqref{equation1.10}:
\begin{equation}
\label{equation1.11}
\frac{d\phi_D}{\phi_D} = 
\left[ \sum_{\ell \in \Z + \Z\tau} \left(\sum_\alpha
      \frac{m_\alpha}{z-\ell-\alpha}
\right)\right] dz,
\end{equation}
which agrees with the logarithmic derivative from the ``correct''
product of $\sigma$-functions, as presented in Theorem~2.8
of~\cite{KKMmoduli}.  Once again, the sum over $\ell$ has good
convergence, and we can expand $m_\alpha/(z-\ell-\alpha) =
-m_\alpha((\ell+\alpha)^{-1} + (\ell+\alpha)^{-2} z + \cdots)$ for the pairs
with $\ell+\alpha \neq 0$; the terms with $\ell+\alpha=0$ contribute
$n/z$.  Combining this, we get the expansion of $d\phi_D/\phi_D$ in
terms of the coefficients $\{g_k\}$, and we immediately identify each
$g_k$ as the negation of the desired Eisenstein series.
\end{proof}

\section{Lecture 2}

This lecture will discuss explicit models for modular curves.
At first, we will work primarily with the function field of $X_0(N)$
when we discuss the modular equation, but in the second part of the
lecture, we will view modular forms on $\GammaN$ primarily as sections
of line bundles on the modular curve $X(N)$.  We assume some familiarity with
Riemann-Roch spaces, but not necessarily with line bundles and their
connection with projective embeddings, which we will discuss
informally once we start using those concepts.

We will work entirely over $\C$, but, as in the first lecture, the
reader is encouraged to picture how most of our constructions actually
take place over a number field, viewed as a subfield of $\C$.  This is
the main, but not the only, source of the arithmetic subtlety captured
by modular forms.  The background to this is that modular curves such
as $X(N)$ and $X_0(N)$ have a rich structure in arithmetic geometry,
so that rational points on these curves (over a number field $K$)
correspond to interesing elliptic curves defined over $K$.  Having access
to good models of modular curves is also useful in a number of algorithmic
applications, such as the Schoof--Elkies--Atkin algorithm for counting points
on an elliptic curve over a finite field.  The supreme arithmetic
application of modular curves is in their relation to the Galois
representations attached to Hecke eigenforms, as extensively illustrated in
other lectures from this summer school.  For a cuspidal eigenform $f$ of
weight~$2$ on $\Gamma_1(N)$, say, the mod~$\ell$ Galois representation
$\rho_{f,\ell}$ can be realized inside the $\ell$-torsion points of the
Jacobian variety of the modular curve $X_1(N)$, and the $\ell$-adic Galois
representation can be assembled out of the $\ell^n$ torsion points of the
Jacobian, for varying~$n$.  It is thus of interest to be able to find
explicit algebraic equations for modular curves and their Jacobians.

These explicit equations can even help with finding explicit models
for the Galois representations $\rho_{f,\ell}$ if the weight of $f$ is
greater than~$2$: in that case, the Galois representation is realized
in an \'etale cohomology group of a modular curve with respect to a
nonconstant system of coefficients.  This is less amenable to direct
computation, but it turns out that our given $f$ is in fact congruent
modulo~$\ell$ to a Hecke eigenform $g$ of weight~$2$ but of level
$\Gamma_1(N\ell)$, so that the mod~$\ell$ representations of $f$ and $g$
are the same.  So, subject to increasing the level, this reinforces the
usefulness of having access to explicit models for modular curves and for
working with their Jacobians.  This approach is used in the work
of Couveignes-Edixhoven~\cite{CouveignesEdixhoven} and their students to 
give algorithms to compute explicit Galois representations attached to
forms of higher weight.

Now that the reader is, we hope, sufficiently motivated to find
models of modular curves, we address the issue of precisely how we can
represent a smooth projective algebraic curve $X$, such as $X(N)$.
Broadly speaking, one can view such a curve algebraically, via a model
for its function field $\C(X)$, or geometrically, via an embedding
of the curve $X$ in some projective (or other explicit) space
$\Projective^n$.  From the algebraic point of view, the field $\C(X)$
is of transcendence degree~$1$ over $\C$, so one chooses a
transcendental element $x \in \C(X)$, and considers the finite
extension $\C(X)$ of $\C(x)$.  This is the same as considering a
finite map of curves $X \to \Projective^1$, where $x$ is the
coordinate on $\Projective^1$, so $\C(\Projective^1) = \C(x)$.  Let
$y \in \C(X)$ be a primitive element for the field extension, so
$\C(X) = \C(\Projective^1)[y]$.  Then the elements $x,y$ generate
the function field $\C(X)$, and they satisfy a polynomial equation
$f(x,y)=0$.  The geometric meaning of 
this is that $X$ is birationally equivalent to the plane curve with
affine equation $f(x,y)=0$.  However, the plane curve in question will
usually have singularities (including at infinity, once one moves to
the projective plane), and working directly with that plane model can
be delicate.  What one usually does is to work with the extension
$\C(X)/\C(\Projective^1)$ using algorithms analogous to those used for
computing in a number field $\Q(\alpha)/\Q$: there, one computes the integral
closure $R$ of $\Z$ in $\Q(\alpha)$ to find the ring of integers, and
represents fractional ideals of $R$ as $\Z$-lattices of rank
$[\Q(\alpha):\Q]$.  In the function field case, one has to consider
integral closures over both $\C[x]$ (the analog of $\Z$ here) and a
ring such as $\C[1/x]$, in order to get a handle on the points of $X$
lying above $\infty \in \Projective^1$.  In this lecture, we will use
the algebraic point of view to describe a model for $X_0(N)$, where
the map to $\Projective^1$ is the natural projection to $X(1)$, the
transcendental element generating $\C(X(1))$ is the usual
$j$-function, and the polynomial that we called $f(x,y)$ above is in
fact the modular polynomial $\Phi_N(j,j')$.

As for the geometric point of view, one can run the range between two
extremes.  On the one hand, one can represent $X$ as a curve in
$\Projective^2$ or $\Projective^3$, so the equations of $X$ involve
few variables, but can be of high degree.  On the other hand, one can
take an embedding of $X$ arising from a line bundle of moderately
large degree; this yields an embedding of $X$ into a projective space
$\Projective^n$ with $n$ moderately large (but still comparable to the
genus $g$ of $X$), however with the benefit that the equations for $X$
now have low degree and a simpler structure.  We will illustrate this
second approach later for $X(N)$, where the line bundle in question is
the one whose sections are modular forms of a given weight on
$\GammaN$.  That will require us to review a few constructions in
algebraic geometry, so we will postpone it, and start with the more
concrete approach of using the modular equation to get models for
$X_0(N)$.

We thus proceed to study the modular curve $X_0(N)$, which
parametrizes pairs of elliptic curves connected by an isogeny whose
kernel is cyclic of order $N$.  Over $\C$, one can always analytically
bring this situation to the map $\C/(\Z + \Z N\tau) \to \C/(\Z + \Z\tau)$.
Equivalently, there are two maps from $X_0(N)$ to $X(1)$,
the first sending $\tau \in \Gamma_0(N) \backslash \Half$ to $\tau \in
\Gamma(1)\backslash \Half$, and the second sending $\tau$ to $N\tau
\in \Gamma(1)\backslash \Half$.  The resulting map $X_0(N) \to
X(1)\times X(1)$, induced by the map $\tau \mapsto (\tau, N\tau)$ from
$\Half$ to $\Half \times \Half$, is a birational equivalence between
$X_0(N)$ and its image.  Concretely, we can use the $j$-function as a
coordinate on $X(1)$ to identify $X(1)$ with $\Projective^1$.  We then
obtain that the function field of $X_0(N)$ is generated by the two
modular functions $j(\tau)$ and $j' = j(N\tau)$.  These play the role
of the elements $x,y$ in our above discussion of function fields of
general curves.  We thus have the following classical result.

\begin{proposition}
\label{proposition2a.1}
The function field $\C(X_0(N))$ is generated by the two elements
$j(\tau)$ and $j(N\tau)$, with a single polynomial relation between
them, called the modular equation:
\begin{equation}
\label{equation2a.1}
\Phi_N(j(\tau),j(N\tau)) = 0, \qquad \Phi_N(x,y) \in \Z[x,y].
\end{equation}
The polynomial $\Phi_N(x,y)$ is called the $N$th modular polynomial;
we assert that its coefficients actually belong to $\Z$.  It has the
property that if $E$ and $E'$ are elliptic curves (a priori, over $\C$, but
this works more generally) with $j$-invariants $j(E)$ and $j(E')$,
then there exists a cyclic $N$-isogeny between $E$ and $E'$ if and
only if $\Phi_N(j(E), j(E')) = 0$.
\end{proposition}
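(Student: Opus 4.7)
The plan is to proceed in four stages, with the integrality of coefficients being the main obstacle: show that $j(N\tau)$ descends to a function on $X_0(N)$; construct $\Phi_N\in\C[X,Y]$ as the norm of $j(N\tau)$ under the action of $\Gamma(1)/\Gamma_0(N)$; upgrade the coefficients from $\C$ to $\Z$; and interpret the vanishing locus as a moduli condition. For the first stage, one checks that for $\gamma=\stwomatr{a}{b}{c}{d}\in\Gamma_0(N)$ the matrix identity $N\cdot\gamma\tau=\stwomatr{a}{Nb}{c/N}{d}\cdot(N\tau)$ produces an $\operatorname{SL}(2,\Z)$-element on the right precisely when $N\mid c$, so $j(N\gamma\tau)=j(N\tau)$, and both $j(\tau)$ and $j(N\tau)$ are then meromorphic functions on $X_0(N)$.

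For the construction, I would pick coset representatives $\sigma_i$ of $\Gamma_0(N)\backslash\Gamma(1)$ and, using the same rewriting trick, associate to each one a primitive integer matrix $\alpha_i=\stwomatr{a}{b}{0}{d}$ of determinant $N$ in Hermite normal form ($ad=N$, $0\le b<d$, $\gcd(a,b,d)=1$), chosen so that $j(N\sigma_i\tau)=j(\alpha_i\tau)$; the total number of such $\alpha_i$ matches $[\Gamma(1):\Gamma_0(N)]$. Setting
\[
\Phi_N(X,j(\tau)) \;:=\; \prod_i\bigl(X - j(\alpha_i\tau)\bigr),
\]
the coefficients as a polynomial in $X$ are elementary symmetric functions in the $j(\alpha_i\tau)$, hence $\Gamma(1)$-invariant meromorphic functions on $\Half$ with poles only at $\tau=i\infty$, hence polynomials in $j(\tau)$. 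This yields $\Phi_N\in\C[X,Y]$. Comparing the $X$-degree with $[\C(X_0(N)):\C(j(\tau))]=[\Gamma(1):\Gamma_0(N)]$ shows that $\Phi_N$ is (up to a scalar) the minimal polynomial of $j(N\tau)$ over $\C(j(\tau))$, so $\C(X_0(N))=\C(j(\tau),j(N\tau))$.

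The main technical obstacle is integrality. Each $j(\alpha_i\tau)$ with $\alpha_i=\stwomatr{a}{b}{0}{d}$ has Fourier expansion $\sum_n c_n\zeta_d^{bn}q^{an/d}$ with $c_n\in\Z$, so its coefficients as a series in $q^{1/N}$ lie in $\Z[\zeta_N]$. The Galois group $\operatorname{Gal}(\Q(\zeta_N)/\Q)$ acts by $\zeta_N\mapsto\zeta_N^\ell$ with $\gcd(\ell,N)=1$, and this sends the expansion of $j(\alpha_i\tau)$ to the one obtained by replacing $b$ by $\ell b\bmod d$. Since $d\mid N$ forces $\gcd(\ell,d)=1$, the primitivity condition $\gcd(a,b,d)=1$ is preserved, so Galois permutes the set $\{j(\alpha_i\tau)\}_i$. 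Hence the elementary symmetric functions have $q$-expansions in $\Q\cap\Z[\zeta_N]=\Z$, and a $\Gamma(1)$-invariant meromorphic function with integral $q$-expansion and pole only at $\infty$ lies in $\Z[j]$ (by inductively subtracting off the leading term $c q^{-n}$ via $c\,j^n$), giving $\Phi_N\in\Z[X,Y]$.

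For the moduli interpretation, a cyclic $N$-isogeny $E\to E'$ is the same data as a cyclic order-$N$ subgroup $C\subset E$ together with an isomorphism $E'\isomorphic E/C$. Writing $E=\C/(\Z+\Z\tau)$ and choosing $C=\langle 1/N\rangle$, the quotient is $\C/(\frac{1}{N}\Z+\Z\tau)$, which is homothetic via multiplication by $N$ to $\C/(\Z+\Z N\tau)$; thus $j(E)=j(\tau)$, $j(E')=j(N\tau)$, and $\Phi_N(j(E),j(E'))=0$ by construction. Conversely, if $\Phi_N(j(E),j(E'))=0$ and $j(E)=j(\tau)$, then $j(E')=j(\alpha_i\tau)$ for some $i$, and $\alpha_i$ determines a cyclic subgroup $C\subset E$ whose quotient has $j$-invariant equal to $j(E')$, yielding the required cyclic $N$-isogeny $E\to E/C\isomorphic E'$.
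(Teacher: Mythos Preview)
Your argument is correct and follows essentially the same route as the paper's sketch: both construct $\Phi_N$ as the product over upper-triangular determinant-$N$ matrices $\alpha_i$ (equivalently, coset representatives for $\Gamma_0(N)\backslash\Gamma(1)$), observe that the elementary symmetric functions are $\Gamma(1)$-invariant with pole only at $\infty$ and hence lie in $\C[j]$, and then obtain integrality by noting that the $q$-expansions lie in $\Z[\zeta_N]$ and that $\Gal(\Q(\zeta_N)/\Q)$ permutes the $j(\alpha_i\tau)$. Your write-up is in fact more complete than the paper's (which refers to Lang for details): you explicitly verify the $\Gamma_0(N)$-invariance of $j(N\tau)$, argue that $\Phi_N$ is the minimal polynomial so that $j(\tau),j(N\tau)$ generate the function field, spell out the passage from integral $q$-expansion to $\Z[j]$, and treat the moduli interpretation. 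The only cosmetic difference is that you form $\Phi_N(X,j(\tau))$ with roots $j(\alpha_i\tau)$, whereas the paper writes $\Phi_N(j(\tau),y)$; since $\Phi_N$ is symmetric this is immaterial. One small point you could make explicit is why the $j(\alpha_i\tau)$ are pairwise distinct as functions of $\tau$ (e.g., by comparing leading terms $\zeta_d^{-b}q^{-a/d}$), which is what ensures $\Phi_N$ is separable and hence genuinely the minimal polynomial rather than a power of it.
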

For the proof, see for example Chapter~5 of~\cite{LangElliptic}.  Let
us sketch in these notes one way to compute the 
polynomial $\Phi_N(x,y) = \sum_{k,\ell} c_{k,\ell}x^k y^\ell$.
Similarly to the situation with Hecke operators, one considers the 
decomposition of a double coset into single cosets, where we know each
$\Gamma(1)$-coset contains an upper triangular representative:
\begin{equation}
\label{equation2a.2}
\Gamma(1) \twomatr{N}{}{}{1} \Gamma(1) 
    = \bigsqcup_{\text{certain } a,b,d} \Gamma(1) \twomatr{a}{b}{}{d}.
\end{equation}
This means effectively that if one fixes $\tau$ and hence a value
$j(\tau)$, then the roots in $y$ of the polynomial $\Phi_N(j(\tau),y)$
are the values $y = j((a\tau+b)/d)$ for those $(a,b,d)$ that appear
(parametrizing different sublattices of $\Z+\Z\tau$ of cyclic index
$N$); note that one of the values of $(a,b,d)$ is $(N,0,1)$, which
corresponds to the root $y = j(N\tau)$.  We thus conclude that
\begin{equation}
\label{equation2a.3}
\sum_{k,\ell} c_{k,\ell} \, j(\tau)^k y^\ell
  = \Phi_N(j(\tau),y)
   = \prod_{\text{the same } a,b,d}
         \bigl[y - j((a\tau+b)/d)\bigr].
\end{equation}
It follows that the coefficients $c_{k,\ell}$ for fixed $\ell$ and
varying $k$ are obtained when one expresses the $\ell$th symmetric
polynomial in the $\{j((a\tau+b)/d)\}$ as a polynomial in
$j(\tau)$.  This is possible because this symmetric polynomial is a
modular function that is $\Gamma(1)$ invariant (due to the double
coset in~\eqref{equation2a.2}), and its only pole is at the
cusp~$\infty$.  We can compute the $q$-expansion of this $\ell$th
symmetric polynomial from the $q$-expansion of $j(\tau) = q^{-1} + 744 + \cdots
\in \Z[[q]]$, and then identify the resulting series in $q$ as a
polynomial $\sum_k c_{k,\ell} \, j(\tau)^k$.  In carrying out this
calculation, one uses $j((a\tau+b)/d) = \zeta_d^{-b} q^{-a/d} + 744 + \cdots$,
where $\zeta_d = \exp((2\pi i)/d)$.  In fact, the calculation takes
place over $\Z[\zeta_N]$, which contains all the $\zeta_d$.  The
invariance of everything under $\Gamma(1)$ implies however that the final
result is invariant under any Galois automorphism of $\Q(\zeta_N)$,
which shows that the coefficients in the final answer all belong
to~$\Z$. 

The coefficients of $\Phi_N$ are notoriously large, and the
birational plane model for $X_0(N)$ given by the equation
$\Phi_N(j,j') = 0$ is rather singular, but this model is still quite
useful in explicit computations.  It should be pointed out that there
are now better ways to compute the modular polynomial, namely, by
interpolation.  The degree of $\Phi_N$ is known (e.g., for $N$ prime,
it is $N+1$), and one knows that $\Phi_N(x,y) = \Phi_N(y,x)$, because
the dual morphism to a cyclic $N$-isogeny is again cyclic of
degree~$N$.  It follows that it is enough to generate enough points
$(j_\alpha,j'_\alpha)$ on the curve $\Phi_N(j,j') = 0$, in order to
obtain enough values to solve for the coefficients
$c_{k,\ell} = c_{\ell,k}$.  The articles \cite{Enge}
and~\cite{BrokerLauterSutherland} do this respectively for collections
of points $(j_\alpha,j'_\alpha) \in \C^2$ or 
$(j_\alpha,j'_\alpha) \in \overline{\F}_p^2$, by taking
a suitable collection of isogenous pairs of elliptic curves over
$\C$ or $\overline{\F}_p$.  In the latter setting, one gets equations for
the $c_{k,\ell} \bmod p$, which one can combine for various~$p$ to
obtain the true value over~$\Z$.

We now move on to the second approach outlined in the introduction of
finding equations for modular curves.  As promised, we begin with an
informal overview of the needed prerequisites from algebraic geometry:
line bundles on (as always, smooth projective) algebraic curves.
For pedagogical reasons, we continue to work over $\C$, to allow the
reader to visualize the situation in the analytic category, not just
algebraically.

\begin{definition}
\label{definition2.1}
A complex line bundle $\LL$ on an algebraic curve $X$ is a
choice, for each point $p \in X$, of a one-dimensional complex vector
space $\LL_p$, in a way that ``varies holomorphically'' with $p$.

Concretely, this means that one can cover $X$ by open sets $\{U_i\}$
such that, for each $U_i$, the totality of vector spaces $\LL_{U_i} =
\{\LL_p \mid p \in U_i\}$ is isomorphic to the product $U_i \times
\C$.  This means that there is an isomorphism (of two-dimensional
complex manifolds) $\psi_i: \LL_{U_i} \to U_i \times \C$, where a
vector $v \in \LL_p$ is mapped to $\psi_i(v) = (p,c)$ for some $c\in
\C$, and the map sending $v$ to $c$ is a $\C$-linear isomorphism
between $\LL_p$ and $\C$; hence the set $\LL_p \subset \LL_{U_i}$ is
identified with $\{p\} \times \C \subset U_i \times \C$.
Whenever $U_i \intersect U_j \neq \emptyset$, these two different
identifications of $\LL_p$ can be compared via a homomorphic nowhere
vanishing transition function $\phee_{i,j}: U_i \intersect U_j \to
\C^*$, where $\psi_j(\psi_i^{-1}(p,c)) = (p,\phee_{i,j}(p) c)$.

Conversely, given a covering $\{U_i\}$ of $X$ by open sets, and a
collection of transition 
functions $\phee_{i,j}$ (which need to be compatible on intersections
$U_i \intersect U_j \intersect U_k$), then one can glue the line
bundles $\{U_i \times \C\}$ together, using the $\phee_{i,j}$, to
obtain a line bundle $\LL$ on $X$.
\end{definition}

The key concept that will matter to us is that of a holomorphic
section of a line bundle $\LL$ on $X$.  This generalizes holomorphic
functions on $X$, which are sections of the trivial line bundle
$X \times \C$.

\begin{definition}
\label{definition2.2}
Let $\LL$ be a line bundle on $X$.  A (holomorphic) section $s$ of
$\LL$ is a function $s:X \to \LL$, such that for every $p\in X$, we
have $s(p) \in \LL_p$.  In terms of the local isomorphisms
$\{\psi_i: \LL_{U_i} \to U_i \times \C\}$, requiring $s$ to be holomorphic
means that for $p \in U_i$, $\psi_i(s(p)) = (p,f_i(p))$ with
$f_i:U_i \to \C$ a holomorphic function.  The resulting ``values in
local coordinates'' $f_i$ of the section~$s$ will then be compatible
in the sense that for $p \in U_i \intersect U_j$, we have 
$f_j(p) = \phee_{i,j}(p)f_i(p)$. 

The set of holomorphic sections is written $H^0(X,\LL)$; it is a
finite-dimensional vector space, that we can always identify with a
Riemann-Roch space, as we will discuss presently.  We can also
consider meromorphic sections of $\LL$, which the reader should have
no trouble defining.  Although $H^0(X,\LL)$ can be zero, there are
always nonzero meromorphic sections of $\LL$.
\end{definition}

We now describe the relation with Riemann-Roch spaces.  Recall that
for a divisor $D = \sum n_p p$ on $X$, the Riemann-Roch space $L(D)$
is the set of function field elements $f \in \C(X)$ that satisfy
$\Divisor f + D \geq 0$; in other words, for each of the (finitely
many) $p$ in the support of $D$, we have $v_p(f) \geq -n_p = -v_p(D)$.
Here the valuation $v_p$ gives the order of the zero of $f$ at $p$ (or
of the pole, if $v_p(f) < 0$); for convenience, we set $v_p(0) = +\infty$.
We remark that the valuation $v_p$ also makes sense for a meromorphic
section $s$ of a line bundle $\LL$, where it will be written
$v_{p.\LL}(s)$.  Namely, suppose $p \in U_i$ for one of the open sets
of the cover, where $s$ is represented by the function $f_i$.  Then
$v_{p,\LL}(s) = v_p(f_i)$.  This is independent of the choice of $U_i$
containing $p$, since $\phee_{i,j}(p) \neq 0$ whenever
$p \in U_i \intersect U_j$.

\begin{proposition}
\label{proposition2.3}
Let $D$ be a divisor on $X$.  Then there exists a line bundle $\LL_D$
with the property that a meromorphic section $s$ of $\LL_D$ can 
be identified with a meromorphic function $\phi_s \in \C(X)$ on
$X$, but with a modified valuation:
$v_{p,\LL_D}(s) = v_p(\phi_s) + v_p(D)$.  Thus $s \in H^0(X,\LL_D)$ if and
only  if for every $p\in X$, we have $v_{p,\LL_D}(s) \geq 0$, which
corresponds precisely to $\phi_s \in L(D)$; note that it is possible to
have $H^0(X,\LL_D) = L(D) = 0$.

Conversely, every line bundle $\LL$ on $X$ is isomorphic to a line
bundle $\LL_D$ for some $D$, which is unique up to equivalence of
divisors (by principal divisors of rational functions in $\C(X)$).
\end{proposition}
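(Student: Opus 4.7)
The plan is to construct $\LL_D$ by gluing local trivializations using locally defining functions for $D$, then read off the valuation formula by unwinding this construction. Since $\supp D$ is finite and $X$ is smooth, I would first cover $X$ by open sets $\{U_i\}$ such that on each $U_i$ the divisor $D$ is cut out by a single meromorphic function $g_i$ on $U_i$ (for instance, $g_i = \prod_p t_p^{n_p}$ where $t_p$ is a local uniformizer at each $p \in \supp D \intersect U_i$). On overlaps, the ratios $\phee_{i,j} := g_j/g_i$ are holomorphic and nowhere vanishing, because $\Divisor g_i = \Divisor g_j = D|_{U_i \intersect U_j}$, and they automatically satisfy the cocycle condition on triple overlaps. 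So by Definition~\ref{definition2.1} they glue to a line bundle $\LL_D$.

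Next I would interpret sections. If a meromorphic section $s$ is represented locally by $f_i$ on $U_i$, the compatibility $f_j = \phee_{i,j} f_i = (g_j/g_i) f_i$ implies that $f_i/g_i$ is the same meromorphic function on every overlap, hence glues to a single $\phi_s \in \C(X)$. Reading off valuations at $p \in U_i$, $v_{p,\LL_D}(s) = v_p(f_i) = v_p(g_i \phi_s) = v_p(D) + v_p(\phi_s)$, which is the claimed formula. In particular $s \in H^0(X,\LL_D)$ iff each $f_i$ is holomorphic iff $v_p(\phi_s) \geq -v_p(D)$ for all $p$, iff $\phi_s \in L(D)$. The map $s \mapsto \phi_s$ is clearly $\C$-linear, bijective onto $L(D)$ by the same formula used in reverse.

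For the converse, the one non-formal step is to produce a nonzero global meromorphic section $s$ of an arbitrary line bundle $\LL$. Over a smooth projective curve this is the main technical obstacle, but it is guaranteed by Riemann--Roch: twisting $\LL$ by an effective divisor $D'$ of sufficiently large degree makes $H^0(X, \LL \tensor \LL_{D'}) \neq 0$, and dividing a nonzero holomorphic section of this twist by a nonzero section of $\LL_{D'}$ (equivalently, by any nonzero rational function with the right divisor) exhibits a nonzero meromorphic section of $\LL$. Having such an $s$, I would set $D := \sum_p v_{p,\LL}(s)\,[p]$; the local representatives of $s$ in some trivialization then play exactly the role of the $g_i$ above, so the transition functions of $\LL$ agree with those of $\LL_D$, giving $\LL \isomorphic \LL_D$.

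Finally, for uniqueness, if $\LL_D \isomorphic \LL_{D'}$ then the isomorphism, viewed as a nowhere zero nor pole global meromorphic section of $\LL_{D-D'}$, corresponds via the already established formula to a function $\phi \in \C(X)^*$ with $\Divisor \phi = -(D-D') + (D-D') - D' + D'\ldots$ More cleanly: such a section has $v_{p,\LL_{D-D'}} = 0$ everywhere, so $v_p(\phi) = -v_p(D-D') = v_p(D') - v_p(D)$ for all $p$, i.e., $\Divisor\phi = D' - D$. Hence $D$ and $D'$ differ by a principal divisor, completing the proof.
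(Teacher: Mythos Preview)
Your proof is correct and follows the same approach as the paper: build $\LL_D$ from local equations $g_i$ for $D$ with transition functions $g_j/g_i$, identify a meromorphic section with the glued function $f_i/g_i$, and for the converse take $D$ to be the divisor of any nonzero meromorphic section of $\LL$. The only notable difference is that the paper simply invokes the existence of such a section (asserted without proof in Definition~\ref{definition2.2}), whereas you justify it via Riemann--Roch; in the paper's logical order this is a mild forward reference, since the degree appearing in Riemann--Roch is defined only afterward in Definition~\ref{definition2.4} by means of exactly such a section.
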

\begin{proof}
Every divisor $D$ is locally principal, in the sense that there exists
an open cover $\{U_i\}$ of $X$ (in either the analytic or Zariski
topology), with a nonzero meromorphic function $u_i$ on each $U_i$ 
satisfying $(\Divisor u_i)|_{U_i} = D|_{U_i}$.  (The restriction
$D|_{U_i}$ of a divisor $D$ can be thought of as the intersection
$D \intersect U_i$, i.e., the restriction includes only those points
of $D$ that belong to $U_i$.)  Then construct $\LL_D$ by gluing the
$U_i \times \C$ along the transition functions $\phee_{i,j} =
u_j/u_i$.  A holomorphic (respectively, meromorphic) section $s$ of
$\LL_D$ thus corresponds to a collection $\{f_i\}$ of holomorphic
(respectively, meromorphic) functions on each $U_i$, satisfying 
$f_j = (u_j/u_i) f_i$.  Every section $s$ corresponds to the unique
$\phi_s$ that is obtained by gluing together the functions $f_i/u_i$.
So locally, $f_i = \phi_s \cdot u_i$.  Recall that for $p\in U_i$,
we have $v_{p,\LL_D}(s) = v_p(f_i)$; this yields the desired relation
between the valuations of $s$ and $\phi_s$.

For the converse, let $\LL$ be given, choose any nonzero meromorphic
section $s_0$ of $\LL$, and let
$D = \sum_p v_{p,\LL}(s_0)\cdot p = \Divisor_{(\LL)} s_0$ be the
divisor of $s_0$, viewed as a section of $\LL$.  Then we can identify 
any other meromorphic section $s$ of $\LL$ with the meromorphic
function $\phi_s = s/s_0 \in \C(X)$; note that although the values of
$s$ and $s_0$ at a point $p$ belong to $\LL_p$, their ratio is
canonically an element of $\C$ (at least, away from the poles of
$\phi_s$).  This identifies $\LL$ with $\LL_D$; incidentally, the
section $s_0$ of $\LL$ corresponds to the collection of functions
$\{f_i\} = \{u_i\}$ which give a section of $\LL_D$.  In terms of
$\LL$, the bijection between $L(D)$ and $H^0(X,\LL)$ identifies
$\phi \in L(D)$ with $\phi \cdot s_0 \in H^0(X,\LL)$.  Finally, if we
make a different choice of meromorphic section $s_1$ instead of $s_0$
at the start, this modifies $D$ by the principal divisor
$\Divisor(s_1/s_0)$.
\end{proof}

The next important notion in our overview is the degree of a line bundle.

\begin{definition}
\label{definition2.4}
Let $\LL$ be a line bundle on the Riemann surface $X$.  We say that
$\deg \LL = d$ if one meromorphic section $s$ of $\LL$ vanishes at
exactly $d$ points, counting multiplicities, and subtracting any
multiplicities of poles.  Thus $\deg \LL = \deg \Divisor_{(\LL)} s
=  \sum_p v_{p,\LL}(s)$, and this degree does not depend on the choice
of $s$, since all other choices are of the form $sf$ with $f \in
\C(X)$, with moreover $\deg \Divisor f = 0$.

Equivalently, if $\LL \isomorphic \LL_D$, then $\deg \LL = \deg D$.
\end{definition}

A basic consequence of Riemann-Roch is that if $X$ has
genus $g$, and $\deg \LL \geq 2g-1$, then
$\dim H^0(X,\LL) = \deg \LL + 1 - g$.  Another consequence is that if
$\deg \LL \geq 2g$, then $\LL$ is base point free, which means that
for every $p\in X$, there exists a holomorphic section $s \in
H^0(X,\LL)$ with $s(p) \neq 0$.

We are now ready to discuss some aspects of the relation between line
bundles on a curve $X$, and maps from $X$ to a projective space.

\begin{definition}
\label{definition2.5}
Let $\LL$ be a base point free line bundle on $X$.  Take a basis
$\{s_0, s_1, \dots, s_n\}$ for $H^0(X,\LL)$ (more generally, we only
need a basis for a base point free subspace of $H^0(X,\LL)$).  The
associated map from $X$ to the projective space $\Projective^n$ is
given by
\begin{equation}
\label{equation2.1}
\phee:X \to \Projective^n,
\qquad
\phee(p) = [s_0(p):s_1(p):\cdots:s_n(p)].
\end{equation}
Note in the above that, as usual, the values $s_i(p)$ all belong to
the one-dimensional vector space $\LL_p$, but that the proportions
between their values make enough sense for us to get the projective
coordinates of a point in $\Projective^n$.  The reason for requiring
$\LL$ to be base point free is to ensure that we never map a point $p$
to the invalid projective point $[0:0:\cdots:0]$.
\end{definition}

\begin{example}
\label{example2.6}
Let $X$ be an elliptic curve, say for definiteness with affine equation 
$y^2 = x^3 + 3141x + 5926$, and let $O \in X$ be the point at infinity.
Consider the line bundles $\LL_{3O}$ and $\LL_{4O}$.  We can identify
$H^0(X,\LL_{3O})$ with the Riemann-Roch space $L_{3O}$, which has the
basis $\{1,x,y\}$.  The resulting map from $X$ to the projective plane
is the usual one; it sends the affine point $p$ to the projective
point $[1:x(p):y(p)]$, while the point $O$ is sent to $[0:0:1]$.  One
can see this by ``continuity'', because of the Laurent series $x =
t^{-2} + \cdots$ and $y = t^{-3} + \cdots$ in terms of a uniformizer
$t$ at $O$, so as our point ``approaches'' $O$, its projective
coordinates 
$[1:t^{-2}+\cdots:t^{-3}+\cdots] = [t^3: t + \cdots: 1 + \cdots]$
``approach'' $[0:0:1]$.  A less informal way to see this is to
remember that the sections in $H^0(X,\LL_{3O})$ corresponding to
$1,x,y$ are in fact everywhere holomorphic, when viewed as sections of
the line bundle, and to work with a trivialization of $\LL_{3O}$ in
a neighborhood of $O$; this corresponds to the transition function
$y^{-1}$ sending the local coordinate functions $1,x,y$ to $1/y,x/y,1$
near $O$.

As for the line bundle $\LL_{4O}$, take the basis
$\{s_0,s_1,s_2,s_3\}$ of $H^0(X,\LL_{4O})$, corresponding to the basis
$\{1,x,y,x^2\}$ of $L(4O)$.  The resulting map $p \mapsto
[s_0(p):s_1(p):s_2(p):s_3(p)] \in \Projective^3$ embeds $X$ as the
intersection of the two quadric surfaces $s_1^2 - s_0 s_3 = 0$ and
$s_2^2 - s_1s_3 - 3141s_0s_1 - 5926s_0^2 = 0$. 
\end{example}

In the above example, the image of the genus~$1$ curve $X$ under the
projective embedding given by $\LL_{4O}$ is described by quadrics
(i.e., by polynomials of degree~$2$).  This is a special case of the
following general theorem, due independently to Fujita~\cite{Fujita}
and Saint-Donat~\cite{SaintDonat1,SaintDonat2}, building on
results of Castelnuovo and Mumford: 

\begin{theorem}
\label{theorem2.7}
If $X$ has genus~$g$, and $\deg \LL \geq 2g+2$, then the map to
projective space given by $\LL$ is an embedding of $X$, and the image
is defined by quadrics; more precisely, the homogeneous ideal of
vanishing of the image of $X$ in projective space is generated by its
degree~$2$ elements.
\end{theorem}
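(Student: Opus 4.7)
The proof splits into two parts: first, showing that $\LL$ yields an embedding of $X$ into $\Projective^n$, and second, showing that the homogeneous ideal of the image is generated by its degree-$2$ component. The first part is a standard application of Riemann-Roch; the second is the real content.

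For the embedding, my plan is to invoke the criterion that $\LL$ is very ample precisely when it separates points and tangent vectors, which reduces via Riemann-Roch to $H^1(X, \LL(-p-q)) = 0$ for all $p, q \in X$ (allowing $p = q$, to handle tangent vectors). By Serre duality this is dual to $H^0(X, K_X \otimes \LL^{-1}(p+q))$, and the degree $2g - \deg\LL \leq -2$ of the bundle in question makes this vanish. I then set $V = H^0(X, \LL)$ and obtain the embedding $X \hookrightarrow \Projective^n$ with $n + 1 = \dim V$.

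For the ideal, I would proceed in two substeps. First, projective normality: the multiplication $\Sym^k V \twoheadrightarrow H^0(X, \LL^k)$ is surjective for every $k \geq 1$. This lets me identify the homogeneous coordinate ring of $X$ with $\bigoplus_k H^0(\LL^k)$, and the ideal $I$ with the kernel of the natural map from $\Sym^\bullet V$. The standard tool here is the \emph{base-point-free pencil trick}: a two-dimensional base-point-free $W \subset V$ (which exists because $\deg\LL$ is well above the very-ample threshold) gives a short exact sequence $0 \to \LL^{-1} \to W \otimes \mathcal{O}_X \to \LL \to 0$, and tensoring by $\LL^{k-1}$ yields surjectivity of $W \otimes H^0(\LL^{k-1}) \to H^0(\LL^k)$ whenever $H^1(\LL^{k-2}) = 0$. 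This covers $k \geq 3$ cheaply, since $\deg\LL^{k-2} \geq \deg\LL > 2g-2$; the $k = 2$ case is delicate and would be handled by a separate cohomological argument involving the kernel bundle $M_V = \ker(V \otimes \mathcal{O}_X \twoheadrightarrow \LL)$.

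Second, quadratic generation: $I_k = \Sym^{k-2} V \cdot I_2$ for all $k \geq 3$. I would recast this as a Koszul cohomology vanishing, equivalently the statement that the middle cohomology of the three-term complex $\wedge^2 V \otimes H^0(\LL^{k-1}) \to V \otimes H^0(\LL^k) \to H^0(\LL^{k+1})$ is zero. Taking exterior powers of the defining sequence $0 \to M_V \to V \otimes \mathcal{O}_X \to \LL \to 0$ and twisting by $\LL^{k-1}$ reduces this to vanishing statements of the form $H^1(X, \wedge^2 M_V \otimes \LL^{k-1}) = 0$, which the extra unit of degree in the hypothesis $\deg\LL \geq 2g + 2$ (as opposed to $2g + 1$, which only suffices for projective normality) is engineered to deliver. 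This last vanishing is the hard part; everything prior to it is cohomological preparation for it.
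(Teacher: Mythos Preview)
The paper does not prove this theorem. It is stated as a known result, attributed to Fujita and Saint-Donat (building on Castelnuovo and Mumford), and cited without argument. So there is nothing in the paper to compare your proposal against.

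That said, your outline is a recognizable sketch of the modern approach via Koszul cohomology, essentially following Green. The embedding step is routine and correct as you state it. For projective normality, your use of the base-point-free pencil trick handles $k \geq 3$ cleanly, and you are right that $k = 2$ (Castelnuovo's lemma, in effect) needs the separate argument with the kernel bundle $M_V$; you do not actually carry that out. For quadratic generation, your reduction to $H^1(X, \wedge^2 M_V \otimes \LL^{k-1}) = 0$ is the right framework, but the sentence ``the extra unit of degree \ldots\ is engineered to deliver'' this is where the actual work lies, and you have not indicated how: $\wedge^2 M_V$ is a vector bundle of rank $\binom{n}{2}$ whose cohomology is not controlled by a naive degree count. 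The usual route is to filter $M_V$ (or dualize and use $\wedge^{n-1} M_V^\vee$) and reduce to vanishing statements for line bundles, which is where $\deg \LL \geq 2g+2$ rather than $2g+1$ enters concretely. As written, your proposal identifies the correct architecture but stops short of the substantive estimate; it would be fair to call it a roadmap rather than a proof.
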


We now finally come to the application of all this to modular curves.
We first review how modular forms of weight~$k$ on $\GammaN$, with $N
\geq 3$, are sections of a particular line bundle $\LL_k$ on $X(N)$;
this result holds in fact for any subgroup of $\Gamma(1)$, but the
advantage of the group $\GammaN$ is that it has no elliptic points for
$N \geq 3$, and all its cusps are moreover regular; it follows that 
$\LL_k \isomorphic \LL_1^{\otimes k}$, so that
$\deg \LL_k = k \deg \LL_1$.  In the presence of elliptic points or
irregular cusps, the degree of $\LL_k$ is slightly more delicate; see for
example the discussion of the divisor of a modular form in Chapter~2
of~\cite{ShimuraBook}.

To define the line bundle $\LL_k$ on $X(N)$, we depart from our
previous description in terms of an open cover, and instead obtain
$\LL_k$ as the quotient of the trivial bundle on $\Half$ by a
nontrivial action of $\GammaN$.  To be precise, we need to consider the
extended upper half plane $\Half^* = \Half \union \Q \union\{\infty\}$,
with the
topology given in Chapter~I of~\cite{ShimuraBook}, so as to correctly
deal with the cusps; we will however ask for the reader's indulgence,
and gloss over this important point from here on.
The idea is that (holomorphic) sections of the trivial bundle $\Half
\times \C$ are precisely holomorphic functions $f: \Half \to \C$.  We
define an action of $\Gamma(1)$ on the line bundle $\Half \times \C$,
in such a way that sections invariant under a subgroup $\Gamma$ of
$\Gamma(1)$ are precisely the modular forms of weight~$k$ on $\Gamma$.
One can then see that the desired action of 
$\gamma = \stwomatr{a}{b}{c}{d} \in \Gamma(1)$ on a pair
$(\tau,z) \in \Half  \times \C$ is given by
\begin{equation}
\label{equation2.2}
\gamma \cdot (\tau, z)
   = \Bigl(\frac{a\tau+b}{c\tau+d},
        (c\tau + d)^k\cdot z \Bigr).
\end{equation}
We then define $\LL_k$ to be the resulting line bundle on
$X(N) = \GammaN \backslash \Half$ (where we apologize one last time
about the cusps) whose total space is $\GammaN\backslash (\Half \times \C)$.
It follows that
\begin{equation}
\label{equation2.3}
H^0(X(N),\LL_k) = \modforms_k(\Gamma(N));
\end{equation}
it turns out to be slightly more convenient to use the full space
of modular forms than to restrict to cusp forms, which would
correspond to sections of $\LL_k$ that vanish at all cusps.

We can now use a basis $\{f_0, \dots, f_n\}$ for
$\modforms_k(\Gamma(N))$ to obtain a projective embedding of $X(N)$
into $\Projective^n$, at least when $\deg \LL_k \geq 2g+2$, with $g$ the
genus of $X(N)$.  Similarly to~\eqref{equation2.1}, this sends $\tau
\in \GammaN \backslash \Half$ to $[f_0(\tau):\cdots:f_n(\tau)] \in
\Projective^n$; the value in projective space is independent of the
representative $\tau$ chosen, provided all the $f_i$ are evaluated at
the same $\tau$.

\begin{proposition}
\label{proposition2.8}
Let $N \geq 3$.  For the resulting curve $X(N)$, the line bundle 
$\LL_2$ has degree $\deg \LL_2 \geq 2g+2$, and hence gives rise to a
projective embedding of the modular curve with image described by
quadrics.  Knowing the equations of the resulting modular curve is
equivalent to knowing the multiplication map $\modforms_2(\GammaN) \times
\modforms_2(\GammaN) \to \modforms_4(\GammaN)$.
\end{proposition}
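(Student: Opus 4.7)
My plan is to prove the two assertions in turn: first, that $\deg \LL_2 \geq 2g+2$ so that Theorem~\ref{theorem2.7} applies, and second, that the resulting quadratic equations for $X(N)$ correspond exactly to the multiplication map on weight~$2$ forms.

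For the first assertion, since $N \geq 3$ the group $\GammaN$ has no elliptic elements and all cusps are regular, so weight-$2$ modular forms admit the clean identification $\LL_2 \isomorphic \Omega^1_{X(N)}(C)$, where $C$ denotes the reduced divisor of cusps; the underlying statement is that $f(\tau)\,d\tau$ is holomorphic on $Y(N)$ and acquires at most a simple pole at each cusp for $f \in \modforms_2(\GammaN)$, becoming holomorphic precisely for cusp forms. Consequently $\deg \LL_2 = (2g-2) + c$, where $c$ is the number of cusps of $X(N)$, and the required bound $\deg \LL_2 \geq 2g+2$ reduces to $c \geq 4$. The cusps of $X(N)$ are in bijection with primitive pairs $(a,b) \in (\Z/N\Z)^2$ modulo $\pm 1$, giving the count $c = \tfrac{1}{2} N^2 \prod_{p \mid N}(1 - p^{-2})$; this yields $c = 4$ when $N = 3$ and $c \geq 6$ for all $N \geq 4$, establishing the inequality. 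Theorem~\ref{theorem2.7} then furnishes the embedding whose image is cut out by quadrics.

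For the second assertion, fix a basis $\{f_0, \ldots, f_n\}$ of $\modforms_2(\GammaN) = H^0(X(N), \LL_2)$ and use it as projective coordinates via the construction of Definition~\ref{definition2.5}. A quadric $Q = \sum a_{ij} X_i X_j$ in $\Projective^n$ vanishes on the image of $X(N)$ if and only if the section $\sum a_{ij} f_i f_j$ of $\LL_2^{\otimes 2} \isomorphic \LL_4$ is identically zero; equivalently, if and only if $Q$ lies in the kernel of the symmetric bilinear multiplication map $\mu : \Sym^2 \modforms_2(\GammaN) \to \modforms_4(\GammaN)$. Theorem~\ref{theorem2.7} asserts that these degree-$2$ relations generate the full homogeneous ideal of the embedded curve, so the data of $\ker \mu$ (equivalently, the data of $\mu$ itself together with fixed bases of source and target) determines the equations of $X(N) \subset \Projective^n$, and conversely reading off the degree-$2$ part of the homogeneous ideal recovers $\ker \mu$.

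The one point requiring care is the identification $\LL_2 \isomorphic \Omega^1_{X(N)}(C)$, which leans on the regularity of the cusps for $N \geq 3$---exactly the hypothesis that also ensures $\LL_k \isomorphic \LL_1^{\otimes k}$ as mentioned in the text. Once this is granted, both the cusp count and the second assertion are essentially formal; an alternative shortcut is simply to invoke $\deg \LL_1 = (g-1) + c/2$ and double, avoiding an explicit identification at the cusps.
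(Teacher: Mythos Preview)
Your proof is correct and follows essentially the same approach as the paper: identify $\LL_2 \isomorphic \Omega^1_{X(N)}(\textbf{cusps})$ to get $\deg \LL_2 = (2g-2) + c$, check $c \geq 4$ for $N \geq 3$, and then read off the quadrics as the kernel of the multiplication map into $\modforms_4(\GammaN)$. The only difference is that you supply the explicit cusp-count formula $c = \tfrac{1}{2}N^2\prod_{p\mid N}(1-p^{-2})$ where the paper simply asserts $c \geq 4$; this is a harmless (and welcome) addition.
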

\begin{proof}
We can relate $\LL_2$ to the canonical line bundle $\Omega^1$ of
holomorphic $1$-forms on $X(N)$.  It is standard that 
$H^0(X(N),\Omega^1) \isomorphic \cuspforms_2(\GammaN)$ (in fact, this works
for any group $\Gamma$), by identifying a cusp form $f(z)$ with the
differential form $f(z) dz$, which is now invariant under $\GammaN$.  The
reason that the corresponding modular forms are cuspidal can be seen
in terms of the local coordinate at infinity $q^{1/N} = \exp(2 \pi i
z/N)$.  Since $2 \pi i N^{-1} \, dz = q^{-1/N} \, d(q^{1/N})$, the
$q$-expansion of $f$ must start 
with $c_1 q^{1/N} + c_2 q^{2/N} + \cdots$ for $f(z) dz$ to avoid a
pole at $q=0$, i.e., at $z = \infty$; a similar condition holds at the
other cusps.  A 
modular form 
which does not have to be cuspidal thus corresponds to a meromorphic
section of $\Omega^1$, with a possible simple pole at each cusp 
of $X(N)$.  This means that $\LL_2 \isomorphic \Omega^1 \tensor
\LL_{\textbf{cusps}}$, where $\textbf{cusps}$ is the divisor of
the cusps.  In particular,
\begin{equation}
\label{equation2.4}
\deg \LL_2 = \deg \Omega^1 + \deg(\textbf{cusps}) = (2g-2) + c,
\end{equation}
where $c$ is the number of cusps of $X(N)$.  But one knows
that $c \geq 4$ 
once $N \geq 3$.  Thus $\deg \LL_2 \geq 2g+2$, as desired, and the
projective embedding given by a basis of $\modforms_2(\GammaN)$ is
described by quadrics.

Let $\{f_0, \dots, f_n\}$ be a basis for $\modforms_2(\GammaN)$.  Suppose
we wish to determine the quadrics that vanish on the image of $X(N)$, since
these generate the homogeneous ideal of the projective curve.  The presence
of such a quadric of the form
$q(T_0, \dots, T_n) = \sum_{i,j} c_{i,j} T_i T_j$, in 
terms of the homogeneous coordinates $[T_0:\cdots:T_n]$, corresponds to the
identity of modular forms
$\sum_{i,j} c_{i,j} f_i f_j = 0 \in \modforms_4(\GammaN)$.  Hence, to find
the generators of our homogeneous ideal, it is enough to know how to
multiply $f_i f_j$ for every pair $(i,j)$, and how to find linear relations
between these elements of $\modforms_4(\GammaN)$.
\end{proof}

We remark that the same result holds for $X_1(N)$ with $N \geq 5$.  Let us
therefore compute equations for $X_1(5)$ as an example.  This is not
extremely interesting, since the genus of $X_1(5)$ is zero, but it
illustrates the above theorem.  In this situation, we have
$\dim \modforms_2(\Gamma_1(5)) = 3$, and a basis is $\{f,g,h\}$ with
\begin{equation}
\label{equation2.5}
\begin{split}
f &= 1 \quad\quad + 60q^3 - 120q^4 + \cdots,
\\
g &= \quad q \quad + 6q^3 - 9q^4 + \cdots,
\\
h &= \quad \>\>\> q^2 - 4q^3 + 12q^4 + \cdots.
\\
\end{split}
\end{equation}
One also knows that $\modforms_4(\Gamma_1(5))$ is $5$-dimensional, and its
elements are determined by knowing their $q$-expansions up to and including
the $q^4$ term.  One then computes $f^2 = 1 + 120q^3 - 240 q^4 + \cdots$,
$fg = q + 6q^3 + 51q^4 +\cdots$, and so forth, all of which belong to 
$\modforms_4(\Gamma_1(5))$.  One then obtains the equation
\begin{equation}
\label{equation2.6}
  g^2 - fh - 4gh - 16h^2 = 0,
\end{equation}
which means that we have identified $X_1(5)$ with the conic in the
projective plane given by the equation $U^2 - TV - 4UV - 16V^2 = 0$.  The
conic contains the rational point $[T:U:V] = [1:0:0]$ (namely, the point
$q=0$ corresponding to the cusp $\infty$), so we can identify 
the curve $X_1(5)$ with $\Projective^1$ over $\Q$, not just over $\C$.

We point out that describing the multiplication map 
$\modforms_2(\GammaN) \times \modforms_2(\GammaN) \to \modforms_4(\GammaN)$
can be done by interpolation, which ties in with our earlier description of
finding the modular equation $\Phi_N$ by interpolation.  Namely, suppose
that we take a large number (``$L$'') of points $\tau_1, \dots, \tau_L \in
X(N)$, where we require $L > \deg \LL_4$.  Then a modular form 
$f \in \modforms_4(\GammaN)$ is completely determined by its values at
these $L$ points, because if $g$ were a different form agreeing with $f$ at
$\tau_1, \dots, \tau_L$, then the difference $f-g$ would be a nonzero
section of the line bundle $\LL_4$, so $f-g$ could only vanish at 
$\deg \LL_4$ points, contradicting the fact that it vanishes at 
$\tau_1, \dots, \tau_L$.  In that case, the basis $\{f_0, \dots, f_n\}$ of  
$\modforms_2(\GammaN)$, as well as all products $f_i f_j$, can be represented
by their values at $\tau_1, \dots, \tau_L$.  Thus we can identify $f_i$ by
its vector of values $(f_i(\tau_1), \dots, f_i(\tau_L))$, and carry out
multiplication into $\modforms_4$ componentwise in order to find the
quadrics that vanish on the image of $X(N)$.  This is equivalent to finding
the quadrics on $\Projective^n$ that vanish on (i.e., interpolate through)
the projective points $P_1, \dots, P_L$, which are the images of $\tau_1,
\dots, \tau_L$ under the projective embedding.  Concretely,
\begin{equation}
\label{equation2.7}
P_j = [f_0(\tau_j) : \cdots : f_n(\tau_j)]
\end{equation}
so we have reversed our viewpoint.  Whereas we previously fixed $f_i$ and
represented it as a function by its values at varying $\tau_j$, we now
fix $\tau_j$ and represent it as a projective point by the values of the
various $f_i$ at that point.  It turns out~\cite{KKMjacobians} that this is 
an effective way to represent the curve if one is interested in computing
with its Jacobian; this is called ``Representation~B'' in that article.  We
also note that the points $\tau_1,\dots,\tau_L$ do not actually have to be
distinct; representing modular forms by their $q$-expansions up to degree
$q^L$, as we did in the example above, is a way of evaluating the forms at
the divisor $L\cdot \infty$.  We note that the family of Katz modular forms
from Lecture~1 gives an ample supply of modular forms that can be easily
evaluated at points, since evaluating at a $\tau_j$ can be carried out
algebraically by evaluating on a tuple $(E,P,Q,\omega)$.

\section{Exercises}

The following exercises were distributed to students at
the summer school.

\subsection*{Exercise 0}
Give the argument 
that evaluating a weight $k$ Katz modular form on the tuple 
$(E_\tau, 1/N, \tau/N, dz)$ defines a function $f(\tau)$ that
transforms like a usual weight $k$ modular form.

\subsection*{Exercise 1}
a) Use SageMath or Magma (or any other software) to find the modular
polynomial $\Phi_2(X,Y)$ from the identity 
$$\Phi_2(X,j(\tau)) = (X-j(2\tau))(X - j(\tau/2))(X - j((\tau+1)/2)),$$
by comparing $q$-expansions.

b) Find $\Phi_2(X,Y)$ in a different way, by finding enough pairs
$(j,j')$ of $j$-invariants of elliptic curves that are $2$-isogenous
to interpolate $\Phi_2$ through these points.  (What is the degree of
$\Phi_2$ in each of $X$ and $Y$, and how many points are needed?  I
suggest taking curves $E: y^2 = x(x-1)(x-\lambda)$ for a few
values of $\lambda \in \Q$, and all their quotients by cyclic
$2$-torsion subgroups.)

c) Question to think about later: can you find, e.g., $\Phi_5(X,Y)$ by
finding its reduction modulo many primes?  This involves finding for
each $p$ a number of pairs of $5$-isogenous elliptic curves over
$\F_p$ and interpolating through the corresponding $(j,j')\bmod p$.





\subsection*{Exercise 2}
In this exercise, you may assume that $N \geq 3$ is prime if you like,
but see if you can do the general case too.

a) What is the index $[\Gamma(1):\Gamma(N)]$?  What is the degree $d$ of
the map $\pi: X(N) \to X(1)$ between modular curves?

b) What is the ramification of $\pi$ at the cusps?  Use this to find
the number $c$ of cusps of $X(N)$.

c) Let $\LL$ be the line bundle on $X(N)$ whose sections give
$\modforms_1(\Gamma(N))$.  Show that $\deg \LL = d/12$.  (Hint:
$\Delta(z) \in \cuspforms_{12}(\Gamma(1)) \subset \cuspforms_{12}(\Gamma(N))$.)

d) Find the genus of $X(N)$ in terms of $d$ and $c$.  (Hint: consider the
line bundles $\LL^2$, whose sections are $\modforms_2(\Gamma(N))$, and
$\Omega^1 \isomorphic \LL^2(-\textbf{cusps})$, whose sections are
$\cuspforms_2(\Gamma(N))$.)

\subsection*{Exercise 3}
Let $X$ be the (projective model of the) curve $y^3 = x^4 + x
+ 2$.  Its points are the affine points satisfying the above equation,
plus one point $P_0$ at infinity where the rational function $x$ has a
pole of order $3$, and $y$ has a pole 
of order $4$.  This is a $C_{3,4}$ curve; generally, $C_{a,b}$ curves
are a nice source of examples.  (However, if $\abs{a - b} \geq 2$, then
the plane model of a $C_{a,b}$ curve is singular at infinity.) 

a) Compute the Riemann-Roch spaces $L(k P_0)$ for $k \leq 15$, and
deduce that $X$ has genus $3$, either from Riemann-Roch or by any other
method you like.

b) For each of $k = 6,7,8$, consider the resulting map of $X$ to
projective space, and give generators for the ideal describing the
image.

\subsection*{Exercise 4}
Our goal is to find equations for $X = X_1(11)$, which has genus 
$1$.  

a) Use SageMath or Magma to find $q$-expansions of a basis for
$\modforms_2(\Gamma_1(11))$, ordered as an echelon basis in terms of the
order of vanishing at the cusp $\infty$ (i.e., $q=0$).

b) The projective embedding of $X$ given by $\modforms_2(\Gamma_1(11))$
has too large dimension for a human-readable model of the curve.
Instead, obtain a smaller embedding by restricting to a subset 
$V \subset \modforms_2(\Gamma_1(11))$ defined by imposing a certain order of
vanishing at the cusp $\infty$.  This means
that, viewing $\modforms_2(\Gamma_1(11)) = H^0(X,\LL)$ for a suitable line
bundle $\LL$ on $X$, your space $V$ will be $H^0(X, \LL(-k\infty))$ for
some $k$.  Thus the line bundle you will consider will have degree
$(\deg \LL) - k$.

Suggestion: Take $(\deg \LL) - k = 3$ or $4$.  This will produce
either one cubic equation in $\Projective^2$, or two quadric equations in
$\Projective^3$.

\bibliographystyle{amsalpha}
\bibliography{sarajevo2}

\end{document}